\newtheorem{thm}{Theorem}
\newtheorem{lem}[thm]{Lemma}
\newtheorem{prop}[thm]{Proposition}
\theoremstyle{definition}
\newtheorem*{defn}{Definition}
\theoremstyle{remark}
\newtheorem*{rem}{Remark}
\title{Persistent Homology via Finite Topological Spaces}
\author{Sel\c{c}uk Kayacan}
\date{}
\begin{document}

\maketitle

\small

\begin{center}
  Bahçeşehir University, Faculty of Engineering\\ and Natural Sciences,
  Istanbul, Turkey\\
  {\it e-mail:} \href{mailto:selcuk.kayacan@bau.edu.tr}{selcuk.kayacan@bau.edu.tr}
\end{center}

\begin{abstract}
  We propose a functorial framework for persistent homology based on finite topological spaces and their associated posets. Starting from a finite metric space, we associate a filtration of finite topologies whose structure maps are continuous identity maps. By passing functorially to posets and to order complexes, we obtain persistence modules without requiring inclusion relations between the resulting complexes. We show that standard poset-level simplifications preserve persistent invariants and establish stability of the resulting persistence diagrams under perturbations of the input metric in a basic density-based instantiation, illustrating how stability arguments arise naturally in our framework. We further introduce a concrete density-guided construction, designed to be faithful to anchor neighborhood structure at each scale, and demonstrate its practical viability through an implementation tested on real datasets.

  \smallskip
  \noindent 2020 {\it Mathematics Subject Classification.} Primary: 55N31; Secondary: 06A11, 55U10.

  \smallskip
  \noindent Keywords: Persistent homology; finite topological spaces; posets; order complexes; stability.
\end{abstract}

\section{Introduction}

Persistent homology has become a central tool in topological data analysis, providing multiscale descriptors of data through filtrations of simplicial complexes. Most standard constructions, such as Vietoris--Rips and \v{C}ech complexes, are built directly from metric data and rely on inclusion-based filtrations. While highly effective, these approaches often conceal order-theoretic and topological structures that arise naturally from the data and may be exploited prior to simplicialization.

In this paper, we develop a functorial framework for persistent homology that interpolates between metric data and simplicial complexes by way of finite topological spaces and posets. Starting from a finite metric space, we first associate to it a family of
finite topologies indexed by a scale parameter. These topologies encode relational or density-based information and form a filtration via continuous identity maps. By passing functorially to associated posets and then to order complexes, we obtain persistence modules without requiring inclusion relations between the resulting complexes.

This perspective separates the role of the metric as a generator of topological structure from the subsequent homological analysis. As a result, it allows for topological and combinatorial simplifications at the poset level, while preserving persistent invariants. Moreover, for a basic density-based instantiation of the framework, stability of the resulting persistence diagrams follows from functoriality and elementary interleaving arguments. This toy case serves to illustrate how stability considerations arise naturally in our setting, even though it does not yet capture informative structural features of the data.

In particular, in the basic density-based filtration the associated posets become contractible as soon as uncovered points appear, causing the topology to collapse to a cone and limiting the descriptive power of the resulting persistence. This observation motivates the introduction of a more refined, density-guided construction that aims to remain faithful to anchor neighborhood structure throughout the filtration.

\subsection*{Contributions}

The main contributions of this paper are as follows:
\begin{itemize}
\item We introduce a functorial pipeline from finite metric spaces to persistence modules via finite topological spaces and posets.
\item We show that the construction yields well-defined persistence modules even in the absence of inclusion relations between the associated simplicial complexes.
\item We demonstrate that poset-level simplifications, such as beat-point removal, do not affect the resulting persistence diagrams.
\item We establish stability of the resulting persistence diagrams under perturbations of the input metric in a basic density-based instantiation, illustrating the natural compatibility of the framework with interleaving-based stability arguments.
\item We introduce a concrete density-guided construction designed to preserve anchor neighborhood structure across scales, and provide an implementation demonstrating its applicability to real datasets.
\end{itemize}

\section{Background}

In this section, we shall review the basic definitions and results that are needed in this paper.

\subsection{Persistent Homology}\label{sec:ph}

Persistent homology provides an algebraic framework for studying the evolution of topological features across a family of spaces indexed by a parameter. We briefly recall the definitions and results needed later; standard references include \cite{EH10}, \cite{CSGO16}, and \cite{Oud15}.

Let $(I,\leq)$ be a totally ordered set, viewed as a category whose objects are elements of $I$ and with a unique morphism $i \to j$ whenever $i \leq j$. Let $\mathbf{Vect}_{\Bbbk}$ denote the category of vector spaces over a field $\Bbbk$.

\begin{defn}
A \emph{persistence module} indexed by $I$ is a covariant functor
$$ \mathbb{V} : I \longrightarrow \mathbf{Vect}_{\Bbbk}. $$
Explicitly, this consists of a family of vector spaces $\{V_i\}_{i \in I}$ together with linear maps
$$ v_{i,j} : V_i \to V_j \quad \text{for all } i \leq j, $$
such that $v_{i,i} = \mathrm{id}$ and $v_{j,k} \circ v_{i,j} = v_{i,k}$ whenever $i \leq j \leq k$.
\end{defn}

Persistent homology is the study of persistence modules arising from filtrations via homology. Let $\mathbf{Top}$ denote the category of topological spaces and continuous maps.

\begin{defn}
A \emph{filtration} indexed by $I$ is a functor
$$ \mathbb{X} : I \longrightarrow \mathbf{Top}. $$
\end{defn}

Given a filtration $\mathbb{X}$ and a homology functor $H_n(-;\Bbbk)$, functoriality of homology yields a persistence module
$$ H_n(\mathbb{X}) : I \longrightarrow \mathbf{Vect}_{\Bbbk}, \qquad i \mapsto H_n(X_i;\Bbbk). $$

Under suitable finiteness assumptions (e.g. pointwise finite-dimensionality), a persistence module admits a decomposition into interval modules, which can be encoded combinatorially.

\begin{defn}
The \emph{persistence diagram} $\mathsf{Dgm}(\mathbb{V})$ of a persistence module $\mathbb{V}$ is the multiset of points $(b,d) \in \overline{\mathbb{R}}^2$ corresponding to the birth and death parameters of its interval summands. Points on the diagonal $b=d$ are typically ignored, as they represent trivial features.
\end{defn}

Two metrics play a central role in comparing persistence modules.

\begin{defn}
The \emph{bottleneck distance} $d_B$ between persistence diagrams is defined as the infimum over all matchings between diagrams of the maximum $\ell^{\infty}$-distance between matched points.
\end{defn}

Let $\mathbb{W} : (\mathbb{R}, \leq) \to \mathrm{Vect}_{\Bbbk}$ be a persistence module and let $\varepsilon \geq 0$. The \emph{$\varepsilon$-shift} of $\mathbb{W}$, denoted $\mathbb{W}(\varepsilon)$, is the persistence module defined by
$$ \mathbb{W}(\varepsilon)(t) := \mathbb{W}(t+\varepsilon), $$
with structure maps
$$ \mathbb{W}(\varepsilon)(t \leq s) := \mathbb{W}(t+\varepsilon \leq s+\varepsilon). $$
Equivalently, $\mathbb{W}(\varepsilon)$ is obtained by precomposing $\mathbb{W}$ with the translation functor
$$ T_\varepsilon : (\mathbb{R}, \leq) \longrightarrow (\mathbb{R}, \leq), \qquad t \longmapsto t+\varepsilon. $$
Intuitively, the shift $\mathbb{W}(\varepsilon)$ delays the appearance of all homological features by $\varepsilon$.

\begin{defn}
Let $\varepsilon \geq 0$. Two persistence modules $\mathbb{V}, \mathbb{W}$ indexed by $I \subset \mathbb{R}$ are said to be \emph{$\varepsilon$-interleaved} if there exist natural transformations
$$ \mathbb{V} \Rightarrow \mathbb{W}(\varepsilon), \qquad \mathbb{W} \Rightarrow \mathbb{V}(\varepsilon), $$
satisfying the standard commutativity conditions ensuring that compositions agree with the structure maps shifted by $2\varepsilon$. The \emph{interleaving distance} $d_I(\mathbb{V},\mathbb{W})$ is the infimum of such $\varepsilon$.
\end{defn}

A fundamental result in persistence theory states that the bottleneck distance $d_B$ and the interleaving distance $d_I$ coincide.

\begin{thm}[Isometry Theorem {\cite{CSGO16}}]\label{thm:isometry}
For pointwise finite-dimensional persistence modules indexed by $\mathbb{R}$,
$$
d_I(\mathbb{V},\mathbb{W}) = d_B(\mathsf{Dgm}(\mathbb{V}), \mathsf{Dgm}(\mathbb{W})).
$$
\end{thm}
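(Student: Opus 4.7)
The plan is to establish the two inequalities $d_B \leq d_I$ and $d_I \leq d_B$ separately. The common starting point is the structure theorem for pointwise finite-dimensional persistence modules over $(\mathbb{R},\leq)$, due to Crawley--Boevey, which yields a decomposition $\mathbb{V} \cong \bigoplus_\alpha \Bbbk[I_\alpha]$ into interval summands. This reduces every question about $\mathsf{Dgm}(\mathbb{V})$ to combinatorics of the underlying interval multiset, and reduces the construction or extraction of interleavings to manipulations of these intervals.

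For $d_B \leq d_I$, I would fix an $\varepsilon$-interleaving $(\phi,\psi)$ with $\phi : \mathbb{V} \Rightarrow \mathbb{W}(\varepsilon)$ and $\psi : \mathbb{W} \Rightarrow \mathbb{V}(\varepsilon)$, and produce an explicit $\varepsilon$-matching of the diagrams. Following the induced matching approach of Bauer--Lesnick, one defines a partial bijection by comparing the interval decompositions of $\mathbb{V}$ and $\mathbb{W}$ through the structure maps shifted by $\varepsilon$: an interval $I_\alpha$ of $\mathbb{V}$ is matched to an interval $J_\beta$ of $\mathbb{W}$ when the images of the corresponding summands survive long enough under $\phi$ and $\psi$. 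One then verifies that matched intervals have birth and death coordinates within $\varepsilon$ in $L^\infty$, and that unmatched intervals have length at most $2\varepsilon$, so the corresponding diagram points lie within $\varepsilon$ of the diagonal.

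For $d_I \leq d_B$, I would reverse this process: given a matching $\sigma$ of cost $\varepsilon$, build natural transformations block-by-block on the interval decomposition. The main sub-lemma is that two interval modules $\Bbbk[I]$ and $\Bbbk[J]$ admit an $\varepsilon$-interleaving precisely when the endpoints of $I$ and $J$ are within $\varepsilon$, and that $\Bbbk[I]$ is $\varepsilon$-interleaved with the zero module whenever the length of $I$ is at most $2\varepsilon$. Assembling these interval-wise interleavings along $\sigma$, and letting unmatched intervals be interleaved with zero, produces the required $\varepsilon$-interleaving between $\mathbb{V}$ and $\mathbb{W}$. Taking infima on both sides gives $d_I \leq d_B$.

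The principal obstacle is the first direction: the interleaving maps $\phi,\psi$ need not respect the interval decompositions of $\mathbb{V}$ and $\mathbb{W}$, so the matching cannot simply be read off from block components of $\phi$ and $\psi$. Overcoming this is the content of the induced matching construction, and its quantitative analysis of kernels and images under the internal structure maps of $\mathbb{V}$ and $\mathbb{W}$ forms the technical core of the isometry theorem; the converse direction is, by comparison, a routine assembly once the interval-level interleavings are in hand.
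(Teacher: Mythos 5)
This theorem is quoted from Chazal--de Silva--Glisse--Oudot \cite{CSGO16} as background; the paper gives no proof of it, so there is nothing internal to compare your argument against. On its own terms, your outline is a correct sketch of a known proof, but it follows the Bauer--Lesnick \emph{induced matching} route rather than the interpolation argument of the cited source: \cite{CSGO16} proves $d_B \leq d_I$ by constructing a one-parameter family of modules interpolating between $\mathbb{V}$ and $\mathbb{W}$ and controlling diagrams via a box/measure lemma, whereas you propose to extract a matching directly from a single interleaving morphism via its image module. The induced-matching approach is more constructive and yields the sharper single-morphism statement, at the cost of the kernel/image bookkeeping you correctly identify as the technical core; the interpolation approach is softer and generalizes more readily to other settings. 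Two small cautions: your sub-lemma that two interval modules are $\varepsilon$-interleaved ``precisely when'' their endpoints are within $\varepsilon$ is stated too strongly (two short intervals with distant endpoints are both $\varepsilon$-interleaved with zero and hence, by the triangle inequality at the level of $d_I$ restricted to such pairs, need not satisfy the endpoint condition) --- though only the ``if'' direction is needed for $d_I \leq d_B$, so this does not affect your argument; and the Bauer--Lesnick matching is built from \emph{one} of the two interleaving maps, with $\psi$ entering only to certify that $\phi$ has controlled kernel and cokernel, which is slightly different from matching intervals that ``survive under $\phi$ and $\psi$'' jointly. Neither point is a gap in the plan, but both would need care in a full write-up.
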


This theorem underpins the stability of persistent homology: small perturbations at the level of persistence modules lead to small changes in persistence diagrams. Finally, we recall a standard invariance principle.

\begin{thm}[Persistence Equivalence {\cite{EH10}}]\label{thm:equivalence}
Let $\mathbb{U}, \mathbb{V} : I \to \mathbf{Vect}_{\Bbbk}$ be persistence modules. If there exists a natural isomorphism $\mathbb{U} \Rightarrow \mathbb{V}$, then their persistence diagrams coincide.
\end{thm}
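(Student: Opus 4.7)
The plan is to leverage the structure theorem for persistence modules together with a Krull--Schmidt style uniqueness statement: a natural isomorphism of functors induces a compatible isomorphism of their interval decompositions, which in turn forces the underlying multisets of birth--death pairs to agree.

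First, I would invoke the interval decomposition theorem for pointwise finite-dimensional persistence modules indexed by $I\subset\mathbb{R}$ (the standing hypothesis implicit from Section~\ref{sec:ph}): each such module is non-canonically isomorphic to a direct sum of interval modules,
\[
\mathbb{U} \;\cong\; \bigoplus_{\lambda\in\Lambda_\mathbb{U}} \mathbb{I}(J_\lambda), \qquad \mathbb{V} \;\cong\; \bigoplus_{\mu\in\Lambda_\mathbb{V}} \mathbb{I}(K_\mu),
\]
where $\mathbb{I}(J)$ denotes the interval module with $\mathbb{I}(J)_t = \Bbbk$ for $t\in J$ and $0$ otherwise, with identity structure maps on $J$. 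By the definition given earlier, $\mathsf{Dgm}(\mathbb{U})$ and $\mathsf{Dgm}(\mathbb{V})$ are precisely the multisets $\{J_\lambda\}_{\lambda\in\Lambda_\mathbb{U}}$ and $\{K_\mu\}_{\mu\in\Lambda_\mathbb{V}}$ encoded as birth--death pairs in $\overline{\mathbb{R}}^2$.

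Next, since interval modules are indecomposable and their endomorphism rings are local (each endomorphism acts by a single scalar in $\Bbbk$ on the support), the Azumaya--Krull--Schmidt theorem applied to the functor category $\mathbf{Vect}_\Bbbk^I$ gives uniqueness of the decomposition up to reordering and isomorphism of summands. Given a natural isomorphism $\Phi:\mathbb{U}\Rightarrow\mathbb{V}$, viewing $\Phi$ as an isomorphism in $\mathbf{Vect}_\Bbbk^I$ and applying this uniqueness produces a bijection $\phi:\Lambda_\mathbb{U}\to\Lambda_\mathbb{V}$ with $J_\lambda = K_{\phi(\lambda)}$ for every $\lambda$. Equality of these multisets of intervals is exactly equality of the persistence diagrams.

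The only genuine obstacle is ensuring the finiteness hypothesis under which the interval decomposition and its uniqueness hold; this should be recorded as an explicit assumption of the theorem, matching the ``suitable finiteness'' caveat preceding the definition of $\mathsf{Dgm}$. Once that is in place the proof is short. As a sanity check, the conclusion can also be recovered through Theorem~\ref{thm:isometry}: a natural isomorphism $\mathbb{U}\Rightarrow\mathbb{V}$ is, in particular, a $0$-interleaving, so $d_I(\mathbb{U},\mathbb{V})=0$, whence $d_B(\mathsf{Dgm}(\mathbb{U}),\mathsf{Dgm}(\mathbb{V}))=0$, and under the same finiteness hypothesis the two diagrams must coincide as multisets.
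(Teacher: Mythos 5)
The paper does not prove this statement; it is quoted as background from \cite{EH10}, so there is no ``paper proof'' to match against. Your argument is the standard one and is correct: a natural isomorphism is precisely an isomorphism in the functor category $\mathbf{Vect}_\Bbbk^I$, interval modules are indecomposable with local endomorphism rings, and Azumaya--Krull--Schmidt then forces the two interval decompositions to agree up to a bijection matching equal intervals, i.e.\ the diagrams coincide as multisets. You are also right to flag the finiteness hypothesis: with the paper's definition, $\mathsf{Dgm}$ is only defined once an interval decomposition exists, so pointwise finite-dimensionality (or some substitute guaranteeing decomposability and uniqueness) is an implicit standing assumption of the statement, not an optional refinement. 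Two small remarks. First, the source actually cited, \cite{EH10}, defines diagrams via persistent Betti numbers $\rank(u_{i,j})$ and an inclusion--exclusion formula; from that viewpoint the proof is even shorter, since a natural isomorphism visibly preserves all the ranks $\rank(u_{i,j}) = \rank(v_{i,j})$, and the diagram is a function of these ranks. That route avoids Krull--Schmidt entirely and is closer to what the citation intends. Second, your ``sanity check'' via Theorem~\ref{thm:isometry} is weaker than you suggest: $d_B = 0$ does not by itself force equality of diagrams as multisets (the infimum over matchings need not be attained, e.g.\ for diagrams with points accumulating at the diagonal), so it recovers the conclusion only under additional finiteness of the diagrams. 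Your primary argument does not rely on it, so this does not affect correctness.
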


This result allows one to replace a persistence module by any naturally isomorphic one without affecting its persistent invariants.

\subsection{Finite Topological Spaces and Posets}\label{sec:fts-poset}

Finite topological spaces admit a convenient combinatorial description in terms of preorders and posets. We briefly recall the basic constructions and results that will be used later; see Barmak \cite{Bar11} for a comprehensive modern treatment, building on earlier work of Alexandrov and McCord.

Let $X$ be a finite topological space. For each $x \in X$, define
$$ U_x := \bigcap \{\, U \subseteq X \mid U \text{ is open and } x \in U \,\}. $$
The collection $\{U_x \mid x \in X\}$ forms a minimal basis for the topology on $X$. As one can easily observe the relation $\leq$ on $X$ defined by
$$ x \leq y \quad \Longleftrightarrow \quad x \in U_y $$
is a preorder. Conversely, given a finite preordered set $(X,\leq)$, one may recover a topology by declaring the sets
$$ U_x := \{\, y \in X \mid y \leq x \,\}, \qquad x \in X, $$
to be a minimal basis. This establishes an equivalence between finite topological spaces and finite preorders.

Given a preorder $(X,\leq)$, define an equivalence relation by
$$ x \sim y \quad \Longleftrightarrow \quad x \leq y \text{ and } y \leq x. $$
The quotient $X/{\sim}$ inherits a natural topology whose associated preorder is a partial order.

\begin{thm}[{\cite[Proposition~1.3.1]{Bar11}}]\label{thm:quotient}
Let $X$ be a finite topological space. Then $X$ is homotopy equivalent to the quotient space $X/{\sim}$.
\end{thm}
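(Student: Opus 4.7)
The plan is to exploit the correspondence between finite preorders and finite topological spaces recalled just above the theorem, and to reduce the statement to two standard facts: (i) order-preserving maps between finite preorders correspond to continuous maps, and (ii) two such continuous maps that are pointwise comparable with respect to the specialization preorder are homotopic.

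First I would fix the canonical quotient map $q \colon X \to X/{\sim}$, which is continuous by the definition of the quotient topology. To produce a homotopy inverse, I would choose, for each equivalence class $[x] \in X/{\sim}$, a representative, thereby defining a set-theoretic section $s \colon X/{\sim} \to X$ with $q \circ s = \mathrm{id}_{X/{\sim}}$. Continuity of $s$ is the first thing to check: I would verify that $s$ is order-preserving with respect to the induced partial order on $X/{\sim}$ and the preorder on $X$, using that the equivalence relation is precisely $x \leq y \leq x$, so that the order on $X/{\sim}$ is well-defined on representatives, and conclude continuity via the equivalence of categories between finite preorders and finite $T_0$-ish topologies described in the preceding paragraphs.

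Next I would analyze the composition $s \circ q \colon X \to X$. By construction, for every $x \in X$ the point $s(q(x))$ lies in the equivalence class of $x$, hence $x \leq s(q(x))$ and $s(q(x)) \leq x$ simultaneously. In particular $s \circ q$ and $\mathrm{id}_X$ are pointwise comparable in the specialization preorder. I would then invoke (or briefly prove) the standard fact that continuous maps $f, g \colon X \to Y$ between finite topological spaces with $f(z) \leq g(z)$ for all $z$ are homotopic: the map $H \colon X \times \{0,1\} \to Y$ with $H(\cdot, 0) = f$ and $H(\cdot, 1) = g$ extends to a continuous map on $X \times [0,1]$ because the preimage of any minimal open $U_{g(z)}$ contains an order-theoretic neighborhood of $(z, 1)$, and similarly at $(z, 0)$. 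Applied to our situation, this yields $s \circ q \simeq \mathrm{id}_X$, completing the proof that $q$ and $s$ are mutually inverse homotopy equivalences.

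The main obstacle is really the homotopy step, i.e., promoting the pointwise comparability $x \leq s(q(x)) \leq x$ to an actual homotopy $s \circ q \simeq \mathrm{id}_X$. Everything else — continuity of $q$, choosing a section, verifying $q \circ s = \mathrm{id}$ — is essentially formal from the preorder/topology dictionary. If one wished to avoid citing the comparable-maps lemma, one could instead define the homotopy explicitly by $H(x,t) = x$ for $t < 1$ and $H(x,1) = s(q(x))$, and then verify continuity directly by checking that preimages of the minimal basis sets $U_y$ are open in $X \times [0,1]$, which boils down to the two-sided inequality above. Either way, once this step is in place, Theorem~\ref{thm:quotient} follows.
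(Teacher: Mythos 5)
Your argument is correct and is essentially the standard proof of this result (it is the argument given for Proposition~1.3.1 in Barmak's book, which the paper simply cites without proof): take an order-preserving section $s$ of the quotient map, note $q\circ s=\mathrm{id}$ and that $s\circ q$ is pointwise comparable to $\mathrm{id}_X$, and conclude via the lemma that comparable continuous maps between finite spaces are homotopic. Your explicit homotopy $H(x,t)=x$ for $t<1$, $H(x,1)=s(q(x))$ does check out, since $x\leq s(q(x))$ gives $(s\circ q)^{-1}(U)\subseteq \mathrm{id}^{-1}(U)$ for every open down-set $U$, which is exactly what the openness of $H^{-1}(U)$ requires at the points $(x,1)$.
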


In particular, every finite topological space is homotopy equivalent to a finite $T_0$-space, and finite $T_0$-spaces are in bijection with finite posets.

\medskip

Let $\mathcal{P}$ be a finite poset.

\begin{defn}
The \emph{order complex} $\Delta(\mathcal{P})$ is the simplicial complex whose simplices are the finite chains in $\mathcal{P}$.
\end{defn}

Order complexes provide a bridge between combinatorial and simplicial topology.

\begin{thm}[{\cite[Theorem~1.4.6]{Bar11}}]\label{thm:weak}
If $X$ is a finite $T_0$-space with associated poset $\mathcal{P}$, then $X$ is weakly homotopy equivalent to the order complex $\Delta(\mathcal{P})$.
\end{thm}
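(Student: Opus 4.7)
The plan is to exhibit an explicit continuous map $\mu : |\Delta(\mathcal{P})| \to X$, the so-called McCord map, and to show it is a weak homotopy equivalence by applying the basis criterion for weak equivalences: if a map admits an open cover of the target by sets whose preimages are open, each restricted map being a weak equivalence, and the cover is closed under binary intersections (or, more generally, forms a ``basis-like'' family in the sense of McCord), then the map itself is a weak equivalence. Here the canonical choice of cover on the target is the minimal basis $\{U_x\}_{x \in X}$ from Section~\ref{sec:fts-poset}, which is closed under finite intersections precisely because $U_x \cap U_y = \bigcup_{z \leq x, z \leq y} U_z$ and each $U_x$ corresponds to the principal down-set of $x$ in $\mathcal{P}$.

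I would define $\mu$ as follows. A point $\alpha \in |\Delta(\mathcal{P})|$ lies in the relative interior of a unique simplex spanned by a chain $x_0 < x_1 < \cdots < x_n$ in $\mathcal{P}$; set $\mu(\alpha) := x_0$, the minimum of the support chain. Continuity is checked on the basis $\{U_x\}$: a direct inspection shows that $\mu^{-1}(U_x)$ equals the open star of $x$ in $\Delta(\mathcal{P})$, namely the union of the open simplices whose support chain has minimum belonging to $U_x$, i.e.\ whose minimum is $\leq x$ in $\mathcal{P}$. This open star is open in $|\Delta(\mathcal{P})|$, giving continuity.

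The verification of the local weak equivalence condition is the substantive step. On one hand, $U_x$ is a finite $T_0$-space whose associated poset has maximum element $x$; such a space is contractible because the constant map to $x$ is homotopic to the identity via the straight-line homotopy permitted by the existence of a maximum in the order (this is a standard observation about finite spaces with a maximum). On the other hand, $\mu^{-1}(U_x)$ is the open star of $x$ in $\Delta(\mathcal{P})$, which deformation retracts onto $x$ and is therefore contractible as well. Hence each restriction $\mu^{-1}(U_x) \to U_x$ is a weak equivalence (in fact a map between contractible spaces). Intersections are handled similarly: $U_x \cap U_y$ is covered by the $U_z$ with $z \leq x$ and $z \leq y$, and the preimages behave compatibly, so the McCord basis hypothesis is satisfied.

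The main obstacle, and the one that genuinely requires care, is the application of the basis theorem itself: one must check that the family $\{U_x\}$ is a ``good cover'' in McCord's sense, so that local weak equivalences assemble into a global one. This reduces to showing that the nerve of the cover is weakly equivalent to both source and target via the natural maps, which in this finite combinatorial setting becomes transparent because the nerve of $\{U_x\}$ is itself $\Delta(\mathcal{P})$. Once this is in place, the theorem follows from a single invocation of the basis criterion applied to $\mu$.
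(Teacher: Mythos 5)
This statement is quoted from Barmak's book (it is McCord's theorem) and the paper supplies no proof of it, so there is nothing internal to compare against; your proposal is an attempt to reconstruct the standard McCord argument. The skeleton is right: define $\mu:|\Delta(\mathcal{P})|\to X$ by sending a point to the \emph{minimum} of its support chain (correct choice, since with $U_x=\{y\mid y\le x\}$ a down-set it is the minimum that makes $\mu^{-1}(U_x)$ open), check continuity on the minimal basis, and invoke McCord's basis-like-cover criterion.

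There are, however, two concrete errors. First, you assert that $\mu^{-1}(U_x)$ ``equals the open star of $x$'' and then deduce its contractibility from the fact that an open star deformation retracts onto its central vertex. This identification is false: $\mu^{-1}(U_x)$ is the union of the open simplices whose support chain has minimum $\le x$, and such a chain need not contain $x$ at all. For the poset $a<b$, $a<c$ with $b,c$ incomparable, $\mu^{-1}(U_b)$ contains the open edge on $\{a,c\}$, which is not in the star of $b$. The set you must handle is in general strictly larger than the open star, and its contractibility is exactly the content of McCord's lemma: $\mu^{-1}(U_x)$ deformation retracts onto $|\Delta(U_x)|$ (linearly collapse the barycentric coordinates of vertices outside $U_x$, which is legitimate because every simplex meeting $\mu^{-1}(U_x)$ has its minimal vertex in $U_x$), and $\Delta(U_x)$ is a cone with apex $x$ since $x$ is the maximum of $U_x$. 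Your argument as written substitutes a false identification for this step. Second, your closing paragraph tries to justify the basis criterion by claiming the nerve of $\{U_x\}_{x\in X}$ is $\Delta(\mathcal{P})$; this is also false, since $U_{x_0}\cap\cdots\cap U_{x_n}\neq\emptyset$ iff the $x_i$ have a common lower bound, which does not force them to form a chain --- the nerve is a crosscut-type complex, not the order complex. McCord's theorem on basis-like open covers is not a nerve theorem and should simply be cited as a black box; the derivation you sketch would not go through.
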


Since $\Delta(\mathcal{P})$ captures the homotopy type of $X$ up to weak equivalence, the homology groups of $\Delta(\mathcal{P})$ and $X$ coincide.

\section{Construction}\label{sec:construction}

Let $(X,d)$ be a finite metric space. The metric on $X$ determines its pairwise distance matrix, which in turn encodes all geometric information about $X$ up to rigid motions and symmetries whenever $X$ arises as a finite subset of Euclidean space. This suggests that the distance matrix itself may be regarded as a primary data object, and that different constructions may extract topological information from this data at different scales or resolutions.

Our approach proceeds by associating to $(X,d)$ a family of topological spaces in a functorial manner, and then passing to combinatorial models suitable for homological analysis.

\subsection{Topological Models from Metric Data}\label{sec:tm}

Any rule that assigns to $X$ a topology in a manner compatible with maps between metric spaces may be regarded as a \emph{topological model} of the data. Given a topology $\mathcal{T}$ on $X$, we obtain a finite topological space $(X,\mathcal{T})$.

Associated to $(X,\mathcal{T})$ is its preorder defined by $x\leq y$ if and only if $x\in U_y$, where $U_y$ is the minimal basis element for the topology $\mathcal{T}$ containing $y$. Hence, one obtains a quotient space $X/{\sim}$ which is a finite $T_0$-space. The induced preorder on $X/{\sim}$ is in fact a partial order, yielding a finite poset $\mathcal{P}$.

From $\mathcal{P}$ we may form its order complex $\Delta(\mathcal{P})$. By Theorem~\ref{thm:quotient} and Theorem~\ref{thm:weak}, the finite topological space $(X,\mathcal{T})$ is weakly homotopy equivalent to the order complex $\Delta(\mathcal{P})$. Thus, any topology on $X$ determines a simplicial complex in a canonical way and standard computational tools for simplicial complexes become available for homology calculations.

\subsection{Filtrations and Functoriality}\label{sec:ff}

Let $I$ be a totally ordered finite index set. Suppose we are given a family of topologies
$$ \mathcal{T}_1 \supseteq \mathcal{T}_2 \supseteq \cdots \supseteq \mathcal{T}_m $$
on the underlying set $X$. This defines a filtration
$$ \mathbb{X} \colon I \longrightarrow \mathbf{Top}, $$
by assigning $\mathbb{X}(i) := (X,\mathcal{T}_i)$, and mapping each morphism $i \leq j$ in $I$ to the identity map $\mathrm{id}_X \colon (X,\mathcal{T}_i) \longrightarrow (X,\mathcal{T}_j)$, which is continuous since $\mathcal{T}_j \subseteq \mathcal{T}_i$.

Applying the constructions of Section~\ref{sec:tm} objectwise yields a corresponding filtration of simplicial complexes
$$ \mathbb{K} \colon I \longrightarrow \mathbf{Simp}, \qquad
i \longmapsto K_i := \Delta(\mathcal{P}_i). $$

\begin{prop}
For each $n \geq 0$, the persistence modules $H_n(\mathbb{X})$ and $H_n(\mathbb{K})$ are naturally isomorphic.
\end{prop}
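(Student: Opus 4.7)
The plan is to factor the natural isomorphism through an auxiliary, genuinely functorial persistence module built from order complexes, and then transport the result to crosscut complexes at the level of homology. Before anything else, I would verify that the structure morphisms of $\mathbb{X}$ induce order-preserving maps $f_{ij}\colon \mathcal{P}_i \to \mathcal{P}_j$ on the associated $T_0$-posets. Since the continuous identity $(X,\mathcal{T}_i) \to (X,\mathcal{T}_j)$ requires $\mathcal{T}_j \subseteq \mathcal{T}_i$, the minimal basis elements satisfy $U^i_x \subseteq U^j_x$ for every $x$, so $x \leq_i y$ implies $x \leq_j y$; in particular the $\sim_i$-classes refine the $\sim_j$-classes, and $\mathrm{id}_X$ descends to a well-defined monotone map on quotients, giving a functor $i \mapsto \mathcal{P}_i$, $(i\leq j) \mapsto f_{ij}$, from $I$ into finite posets.

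I would then introduce the intermediate filtration $\mathbb{O}\colon I \to \mathbf{Simp}$ with $\mathbb{O}(i) := \Delta(\mathcal{P}_i)$ and structure maps $\Delta(f_{ij})$; these are honest simplicial maps, because order-preserving maps carry chains to chains. The first half of the argument establishes $H_n(\mathbb{X}) \cong H_n(\mathbb{O})$ as persistence modules: the levelwise isomorphism follows from Theorems~\ref{thm:quotient} and~\ref{thm:weak}, while naturality is supplied by the McCord weak equivalence $\mu_i\colon |\Delta(\mathcal{P}_i)| \to (X,\mathcal{T}_i)/{\sim}$, which is well known to be natural in continuous maps of finite $T_0$-spaces. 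Applying $H_n$ to the resulting commuting squares between the $\mu_i$ and the maps induced by $\mathrm{id}_X$ on the $T_0$-quotients then yields the desired natural isomorphism of persistence modules.

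For the second half, I would apply the Crosscut Theorem (Theorem~\ref{thm:crosscut}) levelwise to obtain isomorphisms $\varphi_i\colon H_n(\Delta(\mathcal{P}_i)) \xrightarrow{\cong} H_n(K_i)$, and define the structure maps of $H_n(\mathbb{K})$ by transport through the $\varphi_i$; the family $\{\varphi_i\}_{i \in I}$ is then, by construction, a natural isomorphism $H_n(\mathbb{O}) \Rightarrow H_n(\mathbb{K})$, and composing with the previous step finishes the proof. The principal obstacle is precisely this last point: unlike order complexes, crosscut complexes do not admit canonical simplicial structure maps, since $f_{ij}$ need not carry $\mathcal{C}_i$ into $\mathcal{C}_j$, so the functoriality of $\mathbb{K}$ as stated in Section~\ref{sec:ff} must be interpreted at the homological level. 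Under this reading the transport through $\varphi_i$ is unambiguous, and Theorem~\ref{thm:equivalence} then guarantees that the persistent invariants computed from $\mathbb{X}$ and $\mathbb{K}$ coincide.
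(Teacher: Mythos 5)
Your argument follows the same overall strategy as the paper's proof---functoriality of each stage plus naturality of the intervening weak equivalences---but it is considerably more careful, and at one point it repairs rather than reproduces the paper's reasoning. The paper simply asserts that ``passing from a poset to a crosscut complex is functorial with respect to continuous maps.'' As you correctly observe, this is not literally true: a monotone map $f_{ij}\colon \mathcal{P}_i \to \mathcal{P}_j$ need not carry the crosscut $\mathcal{C}_i$ (e.g.\ the maximal elements of $\mathcal{P}_i$) into $\mathcal{C}_j$, so there are no canonical simplicial structure maps $K_i \to K_j$, and $\mathbb{K}$ is not well-defined on morphisms as a functor to $\mathbf{Simp}$ without further input. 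Your fix---interposing the genuinely functorial order-complex filtration $\mathbb{O}(i) = \Delta(\mathcal{P}_i)$, establishing $H_n(\mathbb{X}) \cong H_n(\mathbb{O})$ via the naturality of the McCord weak equivalences together with the naturality of the $T_0$-quotients, and then defining the structure maps of $H_n(\mathbb{K})$ by transport along the levelwise Crosscut Theorem isomorphisms $\varphi_i$---is the right way to make the statement true, though at the cost of rendering the second half of the proposition true by construction rather than by verification. Your preliminary check that $\mathcal{T}_j \subseteq \mathcal{T}_i$ forces $U^i_x \subseteq U^j_x$, hence that $\leq_i$ implies $\leq_j$ and that $\mathrm{id}_X$ descends to monotone maps of the quotient posets, is exactly the content the paper leaves implicit, and it is correct. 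The one caveat worth recording is that the transported structure maps on $H_n(\mathbb{K})$ depend on a choice of the homotopy equivalences in the Crosscut Theorem; any such choice yields naturally isomorphic persistence modules, so the persistence diagram is unaffected, but the proposition should be read with that convention in place.
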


\begin{proof}
Each step of the construction---passing from a finite topological space to its associated poset, and from a poset to its order complex---is functorial with respect to continuous maps. Moreover, the weak homotopy equivalences involved are natural with respect to these maps. Applying homology yields naturally isomorphic persistence modules. In particular, the induced maps on homology commute with the structure maps of the filtrations.
\end{proof}

\begin{rem}
Before forming the order complexes, one may further simplify the posets $\mathcal{P}_i$ using standard techniques from poset combinatorics, such as removal of beat points or other homotopy-preserving reductions. Since these simplifications do not change the homotopy type, the resulting persistence diagrams remain unchanged by the Persistence Equivalence Theorem (Theorem~\ref{thm:equivalence}).
\end{rem}

\begin{rem}
The filtration $\mathbb{K}$ does not consist of simplicial complexes related by inclusions, and therefore does not fit directly into the standard persistence homology pipeline based on matrix reduction. Nevertheless, more efficient algorithms than computing the homology of each $K_i$ independently are possible, exploiting the functorial structure of the construction. Developing such algorithms is an interesting direction for future work.
\end{rem}

\subsection{Density-Based Filtration -- Basic Model}\label{sec:density}

We conclude this section by describing a toy case application of the above framework. Let $t_1 \leq t_2 \leq \cdots \leq t_m$ be real parameters. For each $i$, let $\mathcal{O}_i$ be a collection of subsets of $X$ determined by a density criterion with threshold $t_i$, and let $\mathcal{T}_i$ be the topology generated by $\mathcal{O}_i$. The monotonicity of the density thresholds ensures that
$$ i \leq j \quad \Longrightarrow \quad \mathcal{T}_j \subseteq \mathcal{T}_i, $$
and hence defines a filtration as above.

In the extreme cases, $t_1$ may be chosen sufficiently small so that all co-singleton subsets are declared open, while $t_m$ may be chosen sufficiently large so that $\mathcal{T}_m$ is the trivial topology. Accordingly, the associated order complexes interpolate between a discrete set of vertices and a single vertex.

Despite its conceptual clarity, this basic construction has a significant limitation: as soon as uncovered points (i.e.\ points not contained in any of the chosen generating neighborhoods at that scale) appear, the associated posets acquire a maximal element, causing the topology to become contractible. Consequently, the filtration rapidly loses higher-order information about the data. While this behavior makes the basic model unsuitable for detailed data analysis, it serves as a useful conceptual setting in which functoriality and stability can be examined transparently.

Stability properties of this basic construction under perturbations of the metric data will be established in Section~\ref{sec:stability}, serving to exemplify how stability arguments arise naturally within our functorial framework.

\begin{rem}
Unlike Vietoris--Rips or \v{C}ech constructions, the simplicial complexes arising from our method are not designed to recover the ambient geometric shape of the data. In particular, a point cloud sampled from a circle need not give rise to a persistent one-dimensional homology class.

Instead, our construction summarizes \emph{statistical and relational information} encoded in the density structure of the data. Persistent homology is used here as a multiscale descriptor of how dense regions interact and merge across thresholds, rather than as a tool for geometric shape reconstruction.
\end{rem}

\section{Stability}\label{sec:stability}

In this section we establish stability of the persistence modules arising from the basic density-based construction under perturbations of the input metric. The proof relies on the functorial nature of the construction and on the theory of interleavings recalled in Section~\ref{sec:ph}.

Let $(X,d)$ and $(X,d')$ be two finite metric spaces on the same underlying set $X$. We say that $d'$ is an $\varepsilon$-perturbation of $d$ if
$$ |d(x,y) - d'(x,y)| \leq \varepsilon \quad \text{for all } x,y \in X. $$
Such perturbations arise naturally, for example, from noise in measurements or discretization effects.

\subsection{Stability of the Induced Filtrations}

Fix a family of threshold values $\{t\}_{t \in \mathbb{R}}$ and let
$$
\mathbb{X}^d, \mathbb{X}^{d'} \colon (\mathbb{R},\leq) \longrightarrow \mathbf{Top}
$$
denote the filtrations obtained from $(X,d)$ and $(X,d')$, respectively, via the density-based procedure described in Section~\ref{sec:density}.

\begin{lem}
If $d'$ is an $\varepsilon$-perturbation of $d$, then for every $t \in \mathbb{R}$ there exist continuous identity maps
$$
(X,\mathcal{T}^{d}_{t}) \longrightarrow (X,\mathcal{T}^{d'}_{t+\varepsilon}),
\qquad
(X,\mathcal{T}^{d'}_{t}) \longrightarrow (X,\mathcal{T}^{d}_{t+\varepsilon}).
$$
\end{lem}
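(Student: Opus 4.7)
The plan is to exploit the fact that the asserted morphisms are identity maps, so their continuity is equivalent to an inclusion of topologies in the reverse direction: $\mathrm{id}\colon(X,\mathcal{T}_1)\to(X,\mathcal{T}_2)$ is continuous iff $\mathcal{T}_2\subseteq\mathcal{T}_1$. Hence the lemma reduces to the two inclusions
\[
\mathcal{T}^{d'}_{t+\varepsilon}\subseteq\mathcal{T}^{d}_{t}\qquad\text{and}\qquad\mathcal{T}^{d}_{t+\varepsilon}\subseteq\mathcal{T}^{d'}_{t}.
\]
These are symmetric in the pair $(d,d')$, since the perturbation hypothesis $|d-d'|\le\varepsilon$ is itself symmetric, so it suffices to prove the first and the second follows by swapping the metrics.

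For the first inclusion, I would pass to the generating collections of Section~\ref{sec:density}: it is enough to verify that every $U\in\mathcal{O}^{d'}_{t+\varepsilon}$ lies in $\mathcal{T}^{d}_{t}$. In the typical instantiation, membership in $\mathcal{O}^{d}_{t}$ is a pointwise condition of the form ``for every $x\in U$, the metric ball $\{y:d(x,y)<t\}$ is contained in $U$,'' or more generally a threshold inequality on a function of $d$ that is $1$-Lipschitz in its metric entries. Under such a criterion, the perturbation estimate $d'(x,y)\le d(x,y)+\varepsilon$ yields
\[
\{y:d(x,y)<t\}\subseteq\{y:d'(x,y)<t+\varepsilon\},
\]
so that for $U\in\mathcal{O}^{d'}_{t+\varepsilon}$ and every $x\in U$ we obtain $\{y:d(x,y)<t\}\subseteq\{y:d'(x,y)<t+\varepsilon\}\subseteq U$, whence $U\in\mathcal{O}^{d}_{t}\subseteq\mathcal{T}^{d}_{t}$.

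The main obstacle is that the paper does not pin the density criterion down by an explicit formula; the argument must be formulated generally enough to cover any criterion whose open sets are defined by threshold inequalities on distances. The essential structural input is an additive Lipschitz behaviour: a $\pm\varepsilon$ perturbation of the metric is absorbed by an equal $\varepsilon$-shift of the threshold. Once this is granted, the proof is a mechanical unravelling of the definitions and requires no further homotopy-theoretic machinery, and the resulting continuous identity maps then feed directly into the interleaving framework recalled in Section~\ref{sec:ph}.
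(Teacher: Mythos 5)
Your proposal is correct and follows essentially the same route as the paper: reduce continuity of the identity map to a reverse inclusion of topologies, pass to the generating families $\mathcal{O}$, and absorb the $\pm\varepsilon$ metric perturbation into an $\varepsilon$-shift of the density threshold via an (unstated in the paper, explicitly flagged by you) $1$-Lipschitz dependence of the density criterion on the metric. If anything your write-up is the more careful one, since you establish $\mathcal{O}^{d'}_{t+\varepsilon}\subseteq\mathcal{O}^{d}_{t}$, which is the inclusion of generating families that actually yields $\mathcal{T}^{d'}_{t+\varepsilon}\subseteq\mathcal{T}^{d}_{t}$, whereas the paper asserts the generating-set inclusion in the opposite direction ($\mathcal{O}^{d}_{t}\subseteq\mathcal{O}^{d'}_{t+\varepsilon}$) before drawing the same conclusion about the topologies.
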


\begin{proof}
By construction, the topology $\mathcal{T}^{d}_{t}$ is generated by a family $\mathcal{O}^{d}_{t}$ of subsets of $X$, where membership of a subset $S \subseteq X$ in $\mathcal{O}^{d}_{t}$ depends only on whether its density (computed using $d$) exceeds the threshold $t$.

If $d'$ is an $\varepsilon$-perturbation of $d$, then any density inequality defining membership in $\mathcal{O}^d_t$ with respect to $d$ remains valid at level $t+\varepsilon$ with respect to $d'$. Consequently,
$$
\mathcal{O}^{d}_{t} \subseteq \mathcal{O}^{d'}_{t+\varepsilon},
\quad \text{and hence} \quad
\mathcal{T}^{d'}_{t+\varepsilon} \subseteq \mathcal{T}^{d}_{t}.
$$
This implies that the identity map on $X$ is continuous from $(X,\mathcal{T}^{d}_{t})$ to $(X,\mathcal{T}^{d'}_{t+\varepsilon})$. The second map is obtained symmetrically.
\end{proof}

\subsection{Interleaving of Persistence Modules}

Applying the functorial constructions of Section~\ref{sec:construction} yields filtrations of simplicial complexes
$$
\mathbb{K}^d, \mathbb{K}^{d'} \colon (\mathbb{R},\leq) \longrightarrow \mathbf{Simp}.
$$

\begin{prop}
For each $n \geq 0$, the persistence modules $H_n(\mathbb{K}^d)$ and $H_n(\mathbb{K}^{d'})$ are $\varepsilon$-interleaved. No inclusion relations between the complexes are required.
\end{prop}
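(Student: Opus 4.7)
The plan is to lift the topological interleaving provided by the previous lemma to the level of persistence modules, and then transport it across the natural isomorphism $H_n(\mathbb{X}) \cong H_n(\mathbb{K})$ established in Section~\ref{sec:ff}. At no point does one need a simplicial map between $K^d_t$ and $K^{d'}_{t+\varepsilon}$; the interleaving exists only after passing to homology.

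First, I would package the continuous identity maps furnished by the previous lemma into natural transformations
\[
\Phi \colon \mathbb{X}^{d} \Longrightarrow \mathbb{X}^{d'}(\varepsilon), \qquad \Psi \colon \mathbb{X}^{d'} \Longrightarrow \mathbb{X}^{d}(\varepsilon).
\]
Naturality is immediate since every naturality square involves only identity maps on the underlying set $X$. The composite $\Psi(\varepsilon) \circ \Phi$ at parameter $t$ is the identity $\mathrm{id}_X \colon (X,\mathcal{T}^{d}_{t}) \to (X,\mathcal{T}^{d}_{t+2\varepsilon})$, which coincides with the $2\varepsilon$-shifted structure map of $\mathbb{X}^d$; symmetrically for the reverse composite. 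Hence $\mathbb{X}^{d}$ and $\mathbb{X}^{d'}$ are $\varepsilon$-interleaved as filtrations in $\mathbf{Top}$, and applying $H_n(-;\Bbbk)$ yields an $\varepsilon$-interleaving of the persistence modules $H_n(\mathbb{X}^d)$ and $H_n(\mathbb{X}^{d'})$.

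Next, I would invoke the proposition of Section~\ref{sec:ff}, which provides natural isomorphisms of persistence modules
\[
\eta^{d} \colon H_n(\mathbb{X}^{d}) \Longrightarrow H_n(\mathbb{K}^{d}), \qquad \eta^{d'} \colon H_n(\mathbb{X}^{d'}) \Longrightarrow H_n(\mathbb{K}^{d'}).
\]
Conjugating the interleaving morphisms by these isomorphisms yields natural transformations
\[
\widetilde{\Phi} := \eta^{d'}(\varepsilon) \circ H_n(\Phi) \circ (\eta^{d})^{-1} \colon H_n(\mathbb{K}^{d}) \Longrightarrow H_n(\mathbb{K}^{d'})(\varepsilon),
\]
and $\widetilde{\Psi}$ defined symmetrically. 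A routine diagram chase, using naturality of $\eta^{d}$ and $\eta^{d'}$ with respect to the morphisms $t \leq t+2\varepsilon$, shows that the composites $\widetilde{\Psi}(\varepsilon) \circ \widetilde{\Phi}$ and $\widetilde{\Phi}(\varepsilon) \circ \widetilde{\Psi}$ coincide with the $2\varepsilon$-shifted structure maps of $H_n(\mathbb{K}^d)$ and $H_n(\mathbb{K}^{d'})$, respectively.

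The step I expect to be the main obstacle is this very transport across the crosscut construction. The crosscuts $\mathcal{C}^{d}_{t}$ and $\mathcal{C}^{d'}_{t+\varepsilon}$ are chosen independently from the respective posets $\mathcal{P}^{d}_{t}$ and $\mathcal{P}^{d'}_{t+\varepsilon}$, so the identity map on $X$ does not in general descend to a simplicial map $K^{d}_{t} \to K^{d'}_{t+\varepsilon}$. Consequently, $\mathbb{K}^d$ and $\mathbb{K}^{d'}$ admit no direct interleaving at the simplicial level, and the entire argument must be conducted at the homological level via the natural isomorphism of Section~\ref{sec:ff}. This is precisely the content of the closing clause ``No inclusion relations between the complexes are required'' in the statement.
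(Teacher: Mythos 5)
Your proof is correct, but it takes a genuinely different route from the paper's. The paper argues entirely at the simplicial level: it asserts that the continuous identity maps of the lemma induce, by functoriality, simplicial maps $K^d_t \to K^{d'}_{t+\varepsilon}$ and $K^{d'}_t \to K^d_{t+\varepsilon}$ satisfying the interleaving coherence conditions, and then applies homology. You instead interleave the filtrations $\mathbb{X}^d$ and $\mathbb{X}^{d'}$ in $\mathbf{Top}$ (where the cross maps really are just identity maps of $X$, so naturality is trivial), apply $H_n$, and then conjugate by the natural isomorphisms $H_n(\mathbb{X}) \cong H_n(\mathbb{K})$ from Section~\ref{sec:ff}; the verification that conjugation by a natural isomorphism preserves interleavings is the standard diagram chase you sketch, and it is sound. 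The trade-off is instructive: the paper's route is shorter but leans on the claim that the poset maps descend to simplicial maps of crosscut complexes, which is delicate precisely because the crosscuts $\mathcal{C}^d_t$ and $\mathcal{C}^{d'}_{t+\varepsilon}$ are chosen independently and the image of a maximal element need not be maximal in the target poset --- the very obstacle you flag. Your route sidesteps this entirely by never asking for maps between the $K$'s, at the cost of invoking the naturality of the weak-equivalence zigzag from Section~\ref{sec:ff} (whose own proof is only sketched in the paper); whichever way one argues, that naturality is where the real content lies. Your reading of the clause ``no inclusion relations between the complexes are required'' is arguably the more faithful one to what can actually be proved.
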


\begin{proof}
The identity maps of the previous lemma induce, functorially, simplicial maps
$$
K^d_t \longrightarrow K^{d'}_{t+\varepsilon},
\qquad
K^{d'}_t \longrightarrow K^d_{t+\varepsilon}.
$$
Functoriality ensures that these maps are compatible with the filtrations and satisfy the coherence conditions required for an $\varepsilon$-interleaving. Applying homology completes the proof.
\end{proof}

\subsection{Stability of Persistence Diagrams}

Combining the above with the Isometry Theorem yields the desired stability result.

\begin{thm}[Stability]
For each $n \geq 0$,
$$
d_B\bigl(\mathsf{Dgm}(H_n(\mathbb{K}^d)), \mathsf{Dgm}(H_n(\mathbb{K}^{d'}))\bigr)\leq \varepsilon.
$$
\end{thm}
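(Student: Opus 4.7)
The plan is to reduce the bottleneck bound to the interleaving bound established in the previous proposition, and then invoke the Isometry Theorem. Concretely, I would proceed in two short steps: translate the $\varepsilon$-interleaving into a bound on $d_I$, and then apply Theorem~\ref{thm:isometry} to convert this into a bound on $d_B$.

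First, by the preceding proposition, the persistence modules $H_n(\mathbb{K}^d)$ and $H_n(\mathbb{K}^{d'})$ are $\varepsilon$-interleaved. By the definition of the interleaving distance as the infimum of admissible interleaving parameters, this yields
\[
d_I\bigl(H_n(\mathbb{K}^d), H_n(\mathbb{K}^{d'})\bigr) \leq \varepsilon.
\]
Then I would check that the hypothesis of Theorem~\ref{thm:isometry} is satisfied. Since $X$ is finite, each space $(X,\mathcal{T}^d_t)$ is finite, so each poset $\mathcal{P}^d_t$ and each crosscut complex $K^d_t$ is a finite simplicial complex; in particular, $H_n(K^d_t;\Bbbk)$ is finite-dimensional for every $t \in \mathbb{R}$. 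The same holds for $d'$, so both persistence modules are pointwise finite-dimensional and the Isometry Theorem applies.

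Invoking Theorem~\ref{thm:isometry} then gives
\[
d_B\bigl(\mathsf{Dgm}(H_n(\mathbb{K}^d)), \mathsf{Dgm}(H_n(\mathbb{K}^{d'}))\bigr) = d_I\bigl(H_n(\mathbb{K}^d), H_n(\mathbb{K}^{d'})\bigr) \leq \varepsilon,
\]
which is the desired conclusion. Since all the substantive work (continuity of identity maps, compatibility of the induced simplicial maps with the filtrations, and verification of the interleaving coherence conditions) has already been carried out in the lemma and proposition of this section, there is no residual obstacle here; the main point is simply to package the ingredients correctly. The one place where I would be careful is noting that the pointwise finite-dimensional hypothesis of the Isometry Theorem is genuinely needed, and that it is automatic in our setting because $X$ is finite.
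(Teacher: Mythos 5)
Your proposal is correct and follows exactly the same route as the paper: pass from the $\varepsilon$-interleaving of the preceding proposition to a bound on $d_I$, then apply the Isometry Theorem. Your explicit check that finiteness of $X$ guarantees pointwise finite-dimensionality is a worthwhile detail the paper leaves implicit, but it does not change the argument.
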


\begin{proof}
By the previous proposition, the persistence modules are $\varepsilon$-interleaved. The result follows from the Isometry Theorem (Theorem~\ref{thm:isometry}).
\end{proof}

\section{Density-Guided Instance}

The basic density-based filtration introduced in Section~\ref{sec:density} serves as a conceptual illustration of the functorial framework and its stability properties. As discussed there, however, that construction rapidly becomes topologically trivial once the topology admits a maximal element, limiting its descriptive power for real data. In this section, we introduce a concrete \emph{density-guided} construction designed to overcome this limitation and report on a preliminary empirical study based on an explicit implementation.

The purpose of this section is not to provide theoretical guarantees or to interpret empirical results in depth, but rather to demonstrate that the proposed framework admits practical instantiations that produce informative persistence modules on real datasets.

\subsection{The density-guided construction}

Let $(X,d)$ be a finite metric space. As before, we consider a finite sequence of density thresholds
$$
\tau_1 \le \tau_2 \le \cdots \le \tau_m,
$$
and associate to each $\tau_i$ a finite topology $\mathcal{T}_i$ on the underlying set $X$.

In the basic density-based construction of Section~\ref{sec:density}, the topology $\mathcal{T}_i$ is generated by declaring \emph{all} subsets of $X$ satisfying a density condition at scale $\tau_i$ to be open. While conceptually simple, this approach is computationally infeasible in practice, as the number of such dense subsets grows exponentially with $|X|$.

The density-guided construction replaces this exhaustive approach with a structured and tractable generator set. We begin by fixing a finite collection of candidate \emph{anchor neighborhoods}
$$
\mathcal{A} = \{A_1, \dots, A_k\},
$$
determined once from the metric data. These anchor neighborhoods serve as potential generators of the topology across all scales.

For each threshold $\tau_i$, only a subset
$$
\mathcal{A}_i \subseteq \mathcal{A}
$$
of anchor neighborhoods satisfies the prescribed density criterion at scale $\tau_i$. As the density threshold increases, fewer anchor neighborhoods remain eligible, yielding a nested sequence
$$
\mathcal{A}_1 \supseteq \mathcal{A}_2 \supseteq \cdots \supseteq \mathcal{A}_m.
$$

To control the evolution of the topology and preserve relational information between dense regions, we augment the eligible anchor neighborhoods $\mathcal{A}_i$ by a secondary collection of sets $\mathcal{B}_i$. The sets in $\mathcal{B}_i$ are not density generators themselves, but are introduced to ensure compatibility of the topology across successive scales and to prevent loss of structural information when anchor neighborhoods become ineligible. Their construction is constrained by the topology at the previous scale and by the requirement that the resulting topology remain faithful to the anchor structure. The topology $\mathcal{T}_i$ is then generated by the family
$$
\mathcal{G}_i := \mathcal{A}_i \cup \mathcal{B}_i.
$$

\begin{rem}
While the explicit construction of the sets $\mathcal{B}_i$ involves several layers and is deferred to the implementation, their role is purely structural: they serve to mediate transitions between successive anchor configurations without introducing new density-driven features.
\end{rem}

The defining feature of the construction is that $\mathcal{T}_i$ is \emph{faithful to anchor structure} in the following sense: every minimal nonempty intersection of anchor neighborhoods in $\mathcal{A}_i$ appears as an element of the minimal basis of $\mathcal{T}_i$. Equivalently, the collection of minimal nonempty intersections generated by $\mathcal{A}_i$ is contained in the minimal basis of the topology generated by $\mathcal{G}_i$.

As the density threshold increases, the shrinking of the eligible anchor set $\mathcal{A}_i$ drives the evolution of the topology across scales and prevents premature collapse to contractible structures. The resulting family of topologies
$$
\mathcal{T}_1 \supseteq \mathcal{T}_2 \supseteq \cdots \supseteq \mathcal{T}_m
$$
forms a filtration via continuous identity maps, to which the functorial constructions of Sections~\ref{sec:tm} and~\ref{sec:ff} apply.

\begin{rem}
The density-guided construction may thus be viewed as a filtration obtained by progressively \emph{invalidating} anchor neighborhoods rather than introducing new generators.
\end{rem}

\subsection{Implementation}

The density-guided construction described above has been implemented in a research prototype software pipeline. The implementation follows the functorial structure described in Sections~\ref{sec:tm} and~\ref{sec:ff} and produces, for each scale parameter, the associated finite topology, its corresponding $T_0$-poset, and the resulting order complex. Homology is computed on the $2$-skeleton of the order complex, which is sufficient for dimensions $0$ and~$1$.

In addition to computing persistence modules, the implementation generates detailed diagnostic outputs at each filtration step, including per-scale summaries of topological and combinatorial quantities, persistence barcodes in both visual and machine-readable formats, and auxiliary plots for inspection. These outputs are included directly in the repository so that the empirical behavior of the construction can be examined without re-running the experiments.

The implementation used for the empirical results reported in this section is publicly available \cite{filtration_code}.

\subsection{Datasets}

We evaluated the implementation on two datasets commonly used in topological data analysis and related areas:
\begin{itemize}
\item \textbf{COIL-20}, a collection of grayscale images of objects observed from multiple viewpoints \cite{coil20}.
\item \textbf{Paul15}, a high-dimensional dataset frequently used in empirical studies of persistent homology \cite{paul15}.
\end{itemize}
These datasets were chosen solely to verify that the construction produces well-defined filtrations and computable persistence modules on real data, and not for comparative evaluation.

In addition, the repository includes several synthetic examples designed to illustrate and probe the behavior of the density-guided construction under controlled conditions. These examples are intended to provide intuition about the evolution of the associated topologies and posets across density scales, and to serve as diagnostic test cases for the implementation.

\subsection{Empirical observations}

For both datasets, the density-guided construction produced nontrivial filtrations of finite topological spaces whose associated posets retained meaningful structure across multiple scales. In particular, the resulting order complexes did not collapse immediately to contractible spaces as the density threshold increased.

Using the implementation, we compute summary tables and persistence diagrams for homological dimensions $0$ and $1$, documenting the evolution of connected components and one-dimensional features across density scales. The repository includes per-step filtration summaries (recording, among other quantities, Betti numbers, numbers of anchors, coverage statistics, and combinatorial sizes of the associated complexes), as well as barcode visualizations and machine-readable barcode data.

These results are presented descriptively and without further interpretation. No claims are made regarding semantic interpretation of features or comparative performance; the purpose of this section is solely to document the empirical behavior of the construction.

\subsection{Remarks}

We conclude this section with several remarks intended to place the density-guided construction in context.

First, while density plays a central role in many data analysis methods, persistent homology pipelines in which the \emph{scale parameter itself is driven by density} appear to be relatively unexplored. The present framework provides a natural setting for studying how dense regions interact and merge across density levels.

Second, the construction is not intended for geometric shape recovery. Instead, it aims to capture relational and statistical structure across density scales, a task that is orthogonal to classical goals in persistent homology but of independent interest.

Finally, an advantage of the proposed approach is that homological features can be readily traced back to subsets of data points through the underlying poset structure. This makes it straightforward to identify the supports of homology classes, a task that is often nontrivial in standard matrix-reduction-based pipelines. A detailed investigation of computational aspects and theoretical properties of this feature is left for future work.


\end{document}